\documentclass[11pt]{article}
\usepackage[utf8]{inputenc}
\usepackage{graphicx} 
\usepackage{amsmath,amsfonts,amssymb,amsthm}
\usepackage[mathscr]{eucal}
\usepackage{amscd}
\usepackage{tikz}
\usepackage{tkz-graph}
\usepackage{multicol}
\usepackage{float}
\usepackage{authblk}
\usepackage{titlesec}

\titleformat{\section}
  {\normalfont\Large\bfseries\centering}{\thesection}{1em}{}
\titleformat{\subsection}{\normalfont\large\bfseries\centering}{\thesubsection}{1em}{}

\newtheorem{theorem}{Theorem}[section]

\newtheorem{proposition}[theorem]{Proposition}
\newtheorem{corollary}[theorem]{Corollary}
\newtheorem{conjecture}[theorem]{Conjecture}
\newtheorem{remark}[theorem]{Remark}

\title{The $p$-rationality of $\mathbb{Q}\left(\sqrt{-(kp+m)}\right)$ and $\mathbb{Q}\left(\sqrt{p(p+1)}\right)$}
\author{Chen Lin$^1$\footnote{Corresponding author.}, Xuejun Guo$^2$}
\affil{{\small {$^{1,2}$School of Mathematics, Nanjing University, Nanjing 210093, China}\\
$^1$chen.lin@smail.nju.edu.cn, 
$^2$guoxj@nju.edu.cn} }
\date{}
\date{}

\begin{document}
\maketitle
\begin{abstract}
In this paper, we construct new families of imaginary and real quadratic fields that are $p$-rational.

In the imaginary case, we prove that for any positive integer $k$ and any integer $m$, the imaginary quadratic field $\mathbb{Q}\left(\sqrt{-(kp+m)}\right)$ is $p$-rational for sufficiently large primes $p$. The proof relies on Louboutin's bound on the class numbers of imaginary quadratic fields. As a corollary, we recover the $p$-rationality of consecutive quadratic fields, a result due to Chattopadhyay, Laxmi and Saikia \cite{CLS}.

In the real case, we give an explicit proof of the $p$-rationality of the real quadratic field $\mathbb{Q}\left(\sqrt{p(p+1)}\right)$ for any odd prime $p$, and obtain new  pairs of real quadratic fields $\left(\mathbb{Q}\left(\sqrt{p(p-2)}\right),\allowbreak\mathbb{Q}\left(\sqrt{p(p-1)}\right)\right)$ and $\left(\mathbb{Q}\left(\sqrt{p(p+1)}\right),\mathbb{Q}\left(\sqrt{p(p+2)}\right)\right)$ for any prime $p>3$. We also construct new examples of $p$-rational triquadratic fields.
\end{abstract}
  \noindent { 2020\it Mathematics Subject Classification:} 11R29, 11R11\\[1mm]
    \noindent {\it Keywords: }$p$-rational fields, Greenberg's conjecture.

\section{Introduction}
Let $K$ be a number field and $p$ be a prime number. Let $S$ be the set of primes lying above $p$, and $K_S$ be the maximal $p$-extension of $K$ which is unramified outside the primes above $S$. We say the number field $K$ is \textit{$p$-rational} if the Galois group $\mathrm{Gal}(K_S/K)$ is a free pro-$p$-group.

Movahhedi \cite{Mova1988,Mova} introduced the concept of $p$-rationality to construct infinitely many non-abelian number fields that satisfy Leopoldt's conjecture at the prime $p$. Recently, Greenberg \cite{Greenberg} revisited this concept in his study of some representations of the absolute Galois group $G_{\mathbb{Q}}$ of $\mathbb{Q}$ with large image, and proposed the following conjecture, which was later extended by Cornut and Ray \cite{CR}. 

\begin{conjecture}
    For any odd prime $p$ and any positive integer $t$, there exists a $p$-rational number field $K$ whose Galois group over $\mathbb{Q}$ is isomorphic to $(\mathbb{Z}/2\mathbb{Z})^t$.
\end{conjecture}

This conjecture has been investigated for the cases $t\leq3$ over the past few years \cite{BR,Gras,BM2021,Ko,CLS,Byeon}. Benmerieme and Movahhedi \cite{BM2021} proved that for each odd prime $p$, there exist infinitely many $p$-rational real quadratic fields. They also constructed explicit imaginary and real $p$-rational bi-quadratic fields for each odd prime $p$, thereby resolving Greenberg's conjecture for $t=1$ and $t=2$. Koperecz \cite{Ko} later proved the case $t=3$ for infinitely many primes $p$. Byeon \cite{Byeon} proved the existence of infinitely many triquadratic 3-rational fields. Chattopadhyay, Laxmi and Saikia \cite{CLS} investigated $p$-rationality for certain consecutive imaginary and real quadratic fields. 

The aim of this paper is to present new families of imaginary and real $p$-rational quadratic fields. The structure of this paper is as follows.

In Section \ref{section-(kp+m)}, we prove that for any positive integer $k$ and any integer $m$, the imaginary quadratic field $\mathbb{Q}\left(\sqrt{-(kp+m)}\right)$ is $p$-rational for sufficiently large primes $p$ by using Louboutin's bound for class numbers of imaginary quadratic fields. As a corollary, we recover the simultaneous $p$-rationality of consecutive imaginary quadratic fields $\mathbb{Q}\left(\sqrt{-(p-1)}\right),\cdots,\mathbb{Q}\left(\sqrt{-(p-k)}\right)$, a result first shown by Chattopadhyay, Laxmi and Saikia \cite{CLS}.

In Section \ref{sectionp(p+1)}, we establish the $p$-rationality of the real quadratic number field $\mathbb{Q}\left(\sqrt{p(p+1)}\right)$ for any odd prime $p$. Using this result, we construct pairs of $p$-rational real quadratic fields in arithmetic progression, analogous to the examples provided in \cite{CLS}. Additionally, by applying Koperecz's method \cite{Ko}, we show that given a positive integer $\beta$ such that $\mathbb{Q}\left(\sqrt{-\beta}\right)$ is $p$-rational, the triquadratic number field $\mathbb{Q}\left(\sqrt{p(p+1)},\allowbreak \sqrt{p(p-1)},\allowbreak\sqrt{-\beta}\right)$ is $p$-rational for infinitely many primes $p$, which yields a new family of triquadratic $p$-rational fields.

\medskip
\textbf{Notations.} The following notations will be used throughout this paper:
\begin{itemize}
    \item $p$ always denotes an odd prime number;
    \item For a number field $K$, we denote its discriminant by $d_K$ and its class number by $h_K$;
    \item For a prime ideal $\mathfrak{p}$ of $K$, $K_{\mathfrak{p}}$ denotes the completion of $K$ at $\mathfrak{p}$, $N(\mathfrak{p})$ denotes the norm of $\mathfrak{p}$, and $U_{\mathfrak{p}}^{(i)}$ denotes the $i$\textsuperscript{th} higher unit group in the local field $K_{\mathfrak{p}}$;
    \item $f \ll g$ means $|f| \le cg$ for some unspecified positive constant $c$;
    \item $L(s,\chi_K)$ denotes the Dirichlet $L$-function associated with the character $\chi_K$ of $K$.
\end{itemize}

\section{The $p$-rationality of $\mathbb{Q}\left(\sqrt{-(kp+m)}\right)$}
\label{section-(kp+m)}
We begin by establishing the following result for imaginary quadratic fields.

\begin{theorem}
\label{kp+m}
    Given a positive integer $k$ and an integer $m$, the imaginary quadratic field $\mathbb{Q}\left(\sqrt{-(kp+m)}\right)$ is $p$-rational for sufficiently large primes $p$ (which ensures $kp+m>0$).
\end{theorem}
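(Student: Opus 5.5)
Write $d=kp+m$ and $K=\mathbb{Q}(\sqrt{-d})$. The plan is to use the standard criterion for imaginary quadratic fields, going back to Movahhedi and used by Benmerieme--Movahhedi. For $p>3$, an imaginary quadratic field $K$ is $p$-rational if and only if two conditions hold:
\begin{enumerate}
\item $p \nmid h_K$;
\item when $p$ splits in $K$, the generalized $p$-class number condition holds; equivalently, the $p$-adic regulator part of the torsion module $\mathcal{T}_K$ is trivial.
\end{enumerate}
Since the unit group of $K$ is finite, the torsion of $\mathrm{Gal}(K_S/K)^{ab}$ is controlled by the $p$-part of $h_K$, by the local units modulo global units at the primes above $p$, and by $p$-th roots of unity. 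For $p\ge 5$ we have $\mu_p\not\subset K_{\mathfrak p}$ unless $p$ ramifies in $K$ and $p=3$, so the roots of unity cause no problem. When $p$ splits, Gras's formula $\#\mathcal{T}_K \sim h_K\cdot(\text{local contribution})$ reduces $p$-rationality to $p\nmid h_K$ as well, because there are no global units of infinite order. So I would first state cleanly that, for $p\ge5$, an imaginary quadratic field is $p$-rational if and only if $p\nmid h_K$. This is a known reformulation and I would cite it.

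The main step is then to show $h_K<p$ for all large $p$. Because $k,m$ are fixed, we have $d\le kp+|m|$, so $|d_K|\le 4d\le 4kp+4|m|$. The class number formula gives $h_K=\frac{w_K\sqrt{|d_K|}}{2\pi}L(1,\chi_K)$. Louboutin's bound $L(1,\chi_K)\le \frac12\log|d_K|+c$ (with $c$ an explicit constant, about $0.0230$ or similar) then yields
\[
h_K\le \frac{w_K}{2\pi}\sqrt{|d_K|}\left(\tfrac12\log|d_K|+c\right)\ll \sqrt{p}\,\log p .
\]
This is $o(p)$, so $h_K<p$ for $p$ sufficiently large, and hence $p\nmid h_K$. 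The value $w_K\in\{2,4,6\}$ only changes the constant.

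I expect the main obstacle to be the first step, namely justifying rigorously the criterion ``$p$-rational $\iff p\nmid h_K$'' for imaginary quadratic fields in all three splitting behaviours of $p$ (split, inert, ramified). The ramified case occurs when $p\mid m$, since then $p\mid d$. For this I would use the exact sequence $1\to \overline{E}\to \prod_{\mathfrak p\mid p}U^{(1)}_{\mathfrak p}\to \mathrm{Gal}(H_S/H)\to \mathrm{Cl}_K\otimes\mathbb{Z}_p\to 0$, where $H_S$ is the maximal abelian pro-$p$ extension of $K$ unramified outside $p$ and $H$ is the Hilbert $p$-class field. Here $\overline E$ is finite, and its image is trivial for $p\ge5$. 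Next I would check that $\prod_{\mathfrak{p}\mid p}U^{(1)}_{\mathfrak p}$ is $\mathbb{Z}_p$-free, i.e. that $\mu_p\not\subset K_{\mathfrak p}$. This holds because $[K_{\mathfrak p}:\mathbb{Q}_p]\le 2<p-1$ for $p\ge 5$. It follows that the torsion of $\mathrm{Gal}(H_S/K)$ is trivial exactly when the $p$-class group is trivial. Finally, Leopoldt's conjecture holds trivially for imaginary quadratic fields, so $\mathrm{Gal}(K_S/K)$ is free of rank $r_2+1=2$ if and only if this torsion vanishes.
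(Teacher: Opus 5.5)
Your proof is correct and follows the paper's route. You reduce to $p\nmid h_K$ via the criterion for imaginary quadratic fields; your derivation of it from $0\to\prod_{\mathfrak p\mid p}U^{(1)}_{\mathfrak p}\to\mathrm{Gal}(H_S/K)\to\mathrm{Cl}_K\otimes\mathbb{Z}_p\to 0$ (note: $\mathrm{Gal}(H_S/K)$, not $\mathrm{Gal}(H_S/H)$), with $U^{(1)}_{\mathfrak p}$ torsion-free for $p\ge 5$, just makes explicit what the paper cites from Greenberg and Benmerieme--Movahhedi. You then use Louboutin's bound to get $h_K\ll\sqrt{p}\log p<p$ for large $p$, exactly as the paper does.

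Two corrections, neither of which affects the proof.

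\textbf{The converse is false.} Only the implication ``$p\nmid h_K\Rightarrow K$ is $p$-rational'' is true, and it is all you need. The converse you assert does not follow from that sequence: $\mathrm{Gal}(H_S/K)$ can be $\cong\mathbb{Z}_p^2$ and still contain $\prod_{\mathfrak p\mid p}U^{(1)}_{\mathfrak p}\cong\mathbb{Z}_p^2$ with nontrivial finite index. It is in fact false in general. For example, $\mathbb{Q}(\sqrt{-47})$ has $h_K=5$ yet is $5$-rational, because the generator $(9+\sqrt{-47})/2$ of $\mathfrak p_2^5$ ($\mathfrak p_2\mid 2$) is not a $5$th power in the completion at $5$.

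\textbf{Louboutin's constant.} The value $\approx 0.023$ is his constant for even characters. For the odd character $\chi_K$ it is $\approx 0.716$, which is where the paper's $\log|d_K|+\tfrac32$ comes from. Any bound $L(1,\chi_K)\ll\log|d_K|$ suffices for the argument anyway.
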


To prove this, we use a sufficient criterion for $p$-rationality, which follows from results of Greenberg \cite{Greenberg}, Benmerieme and Movahhedi \cite{BM2021}.

\begin{proposition}[\cite{Greenberg} Proposition 4.1 (i) or \cite{BM2021} Corollary 2.7]
    \label{criterionforimaginary}
    Let $K$ be an imaginary quadratic number field with class number $h_K$ and $p$ be a prime number (where $p\geq5$, or $p=3$ and is unramified in $K/\mathbb{Q}$). If $h_K$ is not divisible by $p$, then $K$ is $p$-rational.
\end{proposition}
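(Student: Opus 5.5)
The plan is to use the standard characterization of $p$-rationality through the torsion of the maximal abelian pro-$p$ extension unramified outside $p$. Let $M$ be the maximal abelian pro-$p$ extension of $K$ unramified outside the primes above $p$, and let $\mathcal{T}_K$ be the torsion subgroup of $\mathrm{Gal}(M/K)$. I will invoke the well-known equivalence due to Movahhedi. It says that $K$ is $p$-rational if and only if $K$ satisfies Leopoldt's conjecture at $p$ and $\mathcal{T}_K=0$. The reason is that the global Euler characteristic formula gives the generator and relation ranks of $\mathrm{Gal}(K_S/K)$. Freeness amounts to $H^2(\mathrm{Gal}(K_S/K),\mathbb{Q}_p/\mathbb{Z}_p)=0$, and this is equivalent to $\mathrm{Gal}(M/K)$ being torsion-free of $\mathbb{Z}_p$-rank $r_2+1$.

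For an imaginary quadratic field, Leopoldt's conjecture is automatic. The unit group $E_K$ is finite, so the $\mathbb{Z}_p$-rank of $\mathrm{Gal}(M/K)$ is $r_2+1=2$. It therefore remains to show $\mathcal{T}_K=0$.

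Next, I will use class field theory to write the exact sequence
\[
0\longrightarrow U_p\big/\overline{\iota(E_K)}\longrightarrow \mathrm{Gal}(M/K)\longrightarrow A_K\longrightarrow 0 .
\]
Here $U_p=\prod_{\mathfrak{p}\mid p}U_{\mathfrak{p}}^{(1)}$, the map $\iota$ is the diagonal embedding (passing to the pro-$p$ part), and $A_K$ is the $p$-Sylow subgroup of the class group. By hypothesis $p\nmid h_K$, so $A_K=0$. Hence $\mathrm{Gal}(M/K)\cong U_p/\overline{\iota(E_K)}$.

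The image of $E_K$ in the pro-$p$ group $U_p$ is its $p$-part, namely $\mu_{p^\infty}(K)$. This vanishes unless $p=3$ and $K=\mathbb{Q}(\sqrt{-3})$, because the roots of unity in an imaginary quadratic field have order dividing $4$ or $6$. The exceptional field $\mathbb{Q}(\sqrt{-3})$ is ramified at $3$, so it is excluded by hypothesis. Thus $\mathrm{Gal}(M/K)\cong U_p$.

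The key local step is to show that $U_p$ is torsion-free. Each $U_{\mathfrak{p}}^{(1)}$ is isomorphic to $\mathbb{Z}_p^{[K_{\mathfrak{p}}:\mathbb{Q}_p]}\times\mu_{p^\infty}(K_{\mathfrak{p}})$. So it suffices that $\mu_p\not\subset K_{\mathfrak{p}}$.

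Since $\mathbb{Q}_p(\mu_p)/\mathbb{Q}_p$ is totally ramified of degree $p-1$, containing $\mu_p$ would force $p-1$ to divide the ramification index $e(\mathfrak{p}/p)\le 2$. For $p\ge5$ this is impossible. For $p=3$ it would require $K_{\mathfrak{p}}$ to be ramified over $\mathbb{Q}_3$, which the hypothesis forbids. Hence $U_p\cong\mathbb{Z}_p^2$, so $\mathcal{T}_K=0$ and $K$ is $p$-rational.

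I expect the main obstacle to be the first step, namely justifying the equivalence "$p$-rational $\iff$ Leopoldt and $\mathcal{T}_K=0$". I would quote it from Movahhedi, or derive it from Tate's global Euler characteristic for $G_S$ together with the identification of $H^2(G_S,\mathbb{Q}_p/\mathbb{Z}_p)^\vee$ with $\mathcal{T}_K$. Everything after that step is the routine bookkeeping above.
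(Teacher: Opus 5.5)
Your proof is correct. The paper does not prove this proposition itself; it quotes it from Greenberg's Proposition 4.1(i) and Benmerieme--Movahhedi's Corollary 2.7 and uses it as a black box. What you wrote is essentially the standard class-field-theoretic argument behind those references, so the comparison is between a citation and a self-contained derivation.

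Your version shows exactly where each hypothesis enters. The assumption $p\nmid h_K$ is used only to kill $A_K$. The condition ``$p\ge 5$, or $p=3$ unramified'' is used only to guarantee $\mu_p\not\subset K_{\mathfrak{p}}$ for every $\mathfrak{p}\mid p$. That local condition already forces $\mu_{p^\infty}(K)=1$, so your separate global check excluding $\mathbb{Q}(\sqrt{-3})$ is subsumed by it. In fact, under the local hypothesis alone your exact sequence gives $\mathcal{T}_K\hookrightarrow A_K$, because $\mathcal{T}_K$ meets the torsion-free subgroup $\prod_{\mathfrak{p}\mid p}U_{\mathfrak{p}}^{(1)}\cong\mathbb{Z}_p^2$ trivially.

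One correction to the justification you sketch for Movahhedi's criterion. Freeness of $G_S=\mathrm{Gal}(K_S/K)$ is \emph{not} equivalent to $H^2(G_S,\mathbb{Q}_p/\mathbb{Z}_p)=0$; that vanishing is equivalent to Leopoldt's conjecture alone. Indeed $H^2(G_S,\mathbb{Q}_p/\mathbb{Z}_p)\cong(\mathbb{Q}_p/\mathbb{Z}_p)^{\delta}$, where $\delta$ is the Leopoldt defect. Its dual $\mathbb{Z}_p^{\delta}$ is torsion-free, so it cannot be identified with $\mathcal{T}_K$.

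The correct statements are as follows. $G_S$ is free if and only if $H^2(G_S,\mathbb{F}_p)=0$. Tate's Euler characteristic formula gives $\dim_{\mathbb{F}_p}H^2(G_S,\mathbb{F}_p)=\dim_{\mathbb{F}_p}\big(G_S^{ab}/p\big)-(r_2+1)$, which vanishes if and only if $G_S^{ab}\cong\mathbb{Z}_p^{r_2+1}$. Equivalently, $H^2(G_S,\mathbb{Z}_p)$ has $\mathbb{Z}_p$-rank $\delta$ and torsion subgroup dual to $\mathcal{T}_K$.

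Since you intend to quote Movahhedi's theorem anyway, this slip does not affect the validity of your proof.
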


This proposition reduces our task to show that $p$ does not divide the class number $h_K$. To this end, we require Louboutin's bound for class numbers of imaginary quadratic fields. 

\begin{proposition}[\cite{Louboutin}, Proposition 2]
\label{hk for imaginary field}
    Let $K$ be an imaginary quadratic field with discriminant $d_K$ and class number $h_K$. Then we have
    $$h_K\leq\frac{\omega_K\sqrt{|d_K|}}{4\pi}\left(\log |d_K|+\frac{3}{2}\right),$$
    where $\omega_K$ is the number of roots of unity in $K$.
\end{proposition}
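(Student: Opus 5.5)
The plan is to reduce the inequality, via the analytic class number formula, to an explicit upper bound for $L(1,\chi_K)$, and then to prove that bound from a rapidly convergent series for $L(1,\chi_K)$ coming from the functional equation. For an imaginary quadratic field with $f=|d_K|$ the class number formula reads
$$h_K=\frac{\omega_K\sqrt{f}}{2\pi}\,L(1,\chi_K),$$
so the proposition is equivalent to
$$L(1,\chi_K)\le \tfrac12\log f+\tfrac34 .$$
Here $\chi_K$ is a real, odd, primitive character of conductor $f$. Elementary routes are too weak for the constant we need. Partial summation combined with Pólya--Vinogradov only gives roughly $\tfrac12\log f+\log\log f$. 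The finite formula $h_K=\frac{1}{2-\chi_K(2)}\sum_{a<f/2}\chi_K(a)$, bounded trivially, gives only $h_K\ll f$. So the main tool will be the theta-function representation.

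\textbf{Step 1 (series for $L(1,\chi_K)$).} For odd primitive $\chi$ of conductor $f$, set
$$\theta(x,\chi)=\sum_{n\ge1}\chi(n)\,n\,e^{-\pi n^2x/f}.$$
Since $\chi_K$ is real, its Gauss sum is $\tau(\chi_K)=i\sqrt f$, and the theta transformation formula becomes $\theta(1/x,\chi_K)=x^{3/2}\theta(x,\chi_K)$. Next, write
$$\Lambda(s)=(f/\pi)^{(s+1)/2}\,\Gamma\!\left(\tfrac{s+1}{2}\right)L(s,\chi_K)=\int_0^\infty \theta(x,\chi_K)\,x^{(s+1)/2}\,\frac{dx}{x}.$$
Split the integral at $x=1$ and apply the transformation on $(0,1)$. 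Evaluating at $s=1$ (where $\Gamma(1)=1$) yields an expression of the shape
$$L(1,\chi_K)=\sum_{n\ge1}\frac{\chi_K(n)}{n}\,\mathrm{erfc}\!\left(n\sqrt{\pi/f}\right)+\sum_{n\ge1}\chi_K(n)\,E\!\left(n\sqrt{\pi/f}\right),$$
with $E$ an explicit function that decays like a Gaussian. Rather than fix $E$ in advance, I will compute it exactly at this stage. Both series converge absolutely and are effectively supported on $n\lesssim\sqrt f$.

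\textbf{Step 2 (trivial bound on each series).} Replace $\chi_K(n)$ by $1$ in absolute value. For the first series, compare $\sum_{n\ge1}\mathrm{erfc}(n\varepsilon)/n$, with $\varepsilon=\sqrt{\pi/f}$, to the integral $\int \mathrm{erfc}(t\varepsilon)\,dt/t$ by Euler--Maclaurin. This gives
$$\log(1/\varepsilon)+\tfrac12(\gamma-\log 4)+\gamma+O(\varepsilon)=\tfrac12\log f-\tfrac12\log\pi+\text{const}.$$
For the second series, $\sum_n E(n\varepsilon)$ is bounded by $\varepsilon^{-1}\int_0^\infty E$ divided by a compensating factor of $\sqrt f$ built into $E$, so it contributes only an absolute constant.

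\textbf{Step 3 (constants; the main obstacle).} The real difficulty is bookkeeping. We must check that the three constants — from Euler--Maclaurin for the first series, from the second series, and the $-\tfrac12\log\pi$ — add up to at most $3/4$, uniformly in $f\ge3$. To get this I will use the explicit inequalities $\mathrm{erfc}(t)\le e^{-t^2}$ and $\sum_{n\ge1}e^{-an^2}\le\tfrac12\sqrt{\pi/a}$. If the resulting constant falls slightly short for small $f$, I will treat the finitely many small discriminants separately, using the known class numbers. Combining the final bound with the class number formula then gives the stated inequality.
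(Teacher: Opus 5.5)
The paper does not prove this statement; it quotes it from Louboutin. Your overall plan is the standard route to such a bound, and your Step 1 setup is right: the Gauss sum $i\sqrt f$, the transformation $\theta(1/x)=x^{3/2}\theta(x)$, and the Mellin integral for $\Lambda$. The difficulty is that the statement is purely a numerical constant. It says $L(1,\chi_K)\le\frac12\log f+\frac34$, while this method yields $\frac12\log f+\frac12(2+\gamma-\log\pi)+O(1/f)$, with $\frac12(2+\gamma-\log\pi)\approx 0.716$; the $\frac32$ is $2+\gamma-\log\pi=1.432\ldots$ rounded up. With a margin of about $0.034$, the bookkeeping you defer to Step 3 is the whole proof, and the inputs you prepare for it are wrong. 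For real odd $\chi_K$, evaluating $\int_1^\infty n e^{-\pi n^2x/f}\,dx$ and $\int_1^\infty n e^{-\pi n^2x/f}x^{-1/2}\,dx$ gives exactly
\[
L(1,\chi_K)=\sum_{n\ge1}\chi_K(n)\Bigl(\frac{e^{-\pi n^2/f}}{n}+\frac{\pi}{\sqrt f}\,\mathrm{erfc}\bigl(n\sqrt{\pi/f}\bigr)\Bigr).
\]
The kernel $\mathrm{erfc}(n\varepsilon)/n$ you anticipated is the one for even characters. Your Step 2 constant is also off by $\gamma$ even for that series: in fact $\sum_{n\ge1}\mathrm{erfc}(n\varepsilon)/n=\log(1/\varepsilon)+\frac{\gamma}{2}-\log 2+O(\varepsilon)$. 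An error of size $\gamma\approx 0.58$ dwarfs the margin, so adding up your constants certifies nothing.

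With $\varepsilon=\sqrt{\pi/f}$, the correct accounting has two parts. First, $\sum_{n\ge1}e^{-n^2\varepsilon^2}/n=\log(1/\varepsilon)+\frac{\gamma}{2}+\frac{\varepsilon^2}{12}+O(\varepsilon^4)$, from the double pole of $\Gamma(s)\zeta(1+2s)\varepsilon^{-2s}$ at $s=0$ and the simple pole at $s=-1$. Second, since $\mathrm{erfc}$ is decreasing, $\frac{\pi}{\sqrt f}\sum_{n\ge1}\mathrm{erfc}(n\varepsilon)\le\sqrt{\pi}\,\varepsilon\int_0^\infty\mathrm{erfc}(t\varepsilon)\,dt=1$.

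The inequalities you name in Step 3 would break this. Applying $\mathrm{erfc}(t)\le e^{-t^2}$ and then $\sum_{n\ge1}e^{-an^2}\le\frac12\sqrt{\pi/a}$ turns that $1$ into $\pi/2$. That overshoots by about $0.57$ for every $f$, which no check of finitely many small discriminants can repair.

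To close the argument you must make the $O(\varepsilon^2)$ remainder of the first series explicit. Then either check directly the finitely many small $|d_K|$ it leaves over, or use the convexity of $\mathrm{erfc}$. By the midpoint rule, $\sum_{n\ge1}\mathrm{erfc}(n\varepsilon)\le\int_{1/2}^\infty\mathrm{erfc}(t\varepsilon)\,dt$. This saves $\frac{\pi}{2\sqrt f}\,\mathrm{erfc}(\varepsilon/2)$ on the second series, which absorbs that remainder for all $f\ge 3$.
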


With these results in hand, we are now ready to prove Theorem \ref{kp+m}.

\begin{proof}[Proof of Theorem \ref{kp+m}]
    Since $k$ is a positive integer and $m$ is a fixed integer, for sufficiently large primes $p$, we have $kp+m > 0$, ensuring that $K = \mathbb{Q}\left(\sqrt{-(kp+m)}\right)$ is strictly an imaginary quadratic field. Then the discriminant of $K$ satisfies
    $$|d_K|\leq4\times\text{square-free part of }(kp+m)\leq4(kp+m).$$
    Then by Proposition \ref{hk for imaginary field}, we have 
    $$h_K\leq\frac{\omega_K\sqrt{4(kp+m)}}{4\pi}\left(\log (4(kp+m))+\frac{3}{2}\right).$$
    Since the number of roots of unity in a quadratic field is 2,4 or 6, we have 
    $$h_K\leq\frac{3\sqrt{(kp+m)}}{\pi}\left(\log (kp+m)+\log 4+\frac{3}{2}\right)\ll \sqrt{p}\log p,$$
    with absolute and effective implied constant. 
    
    Hence, for sufficiently large prime $p$, the class number $h_K<p$, which implies $p\nmid h_K$. By the criterion in Proposition \ref{criterionforimaginary}, we conclude that $K$ is $p$-rational.
\end{proof}

As an application of Theorem \ref{kp+m}, we revisit the problem of consecutive $p$-rational quadratic fields, which was investigated by  Chattopadhyay, Laxmi and Saikia in \cite{CLS}. They established a result which can be derived as an immediate corollary from our theorem. 

\begin{corollary}[\cite{CLS}, Theorem 1.1]
\label{CLSThm1.1}
    Given a positive integer $t$, there exist infinitely many primes $p$ such that the consecutive imaginary quadratic fields $\mathbb{Q}\left(\sqrt{-(p-1)}\right),\cdots,\allowbreak\mathbb{Q}\left(\sqrt{-(p-t)}\right)$ are simultaneously $p$-rational.
\end{corollary}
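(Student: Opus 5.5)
The plan is to apply Theorem \ref{kp+m} once for each $j\in\{1,\dots,t\}$, with $k=1$ and $m=-j$. For each such $j$, Theorem \ref{kp+m} gives a threshold $P_j$ such that $\mathbb{Q}\left(\sqrt{-(p-j)}\right)$ is $p$-rational for every prime $p>P_j$. The threshold should also guarantee $p-j>0$, so that the field really is imaginary quadratic.

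Set $P=\max(P_1,\dots,P_t,t+3)$. For every prime $p>P$, all $t$ fields $\mathbb{Q}\left(\sqrt{-(p-1)}\right),\dots,\mathbb{Q}\left(\sqrt{-(p-t)}\right)$ are then simultaneously $p$-rational. Since there are infinitely many primes exceeding $P$, the corollary follows, and in fact for all sufficiently large $p$ rather than merely infinitely many.

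Two details need a word. First, $p-j$ may be a perfect square times something, but $-(p-j)<0$ is never a square, so $\mathbb{Q}\left(\sqrt{-(p-j)}\right)$ is always a genuine imaginary quadratic field. Second, the case $p=3$ in Proposition \ref{criterionforimaginary} is irrelevant because we take $p$ large.

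The only step needing care is that finitely many thresholds can be combined, which is immediate since $t$ is fixed. So I expect no real obstacle; the substance lies entirely in Theorem \ref{kp+m}, namely the bound $h_K\ll\sqrt{p}\log p<p$.
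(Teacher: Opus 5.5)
Your proposal is correct and is essentially the paper's proof: apply Theorem~\ref{kp+m} with $k=1$ and $m=-1,\dots,-t$, take the maximum of the finitely many thresholds, and conclude that all $t$ fields are simultaneously $p$-rational for every prime beyond it. Your extra safeguards are harmless and already implicit in the ``sufficiently large'' of Theorem~\ref{kp+m}: including $t+3$ in the maximum forces $p-j>0$ and $p\ge 5$, and you note that the fields are genuinely imaginary quadratic.
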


\begin{proof}
    This follows immediately from Theorem \ref{kp+m}. Fix $k=1$, then for $m=-1,-2,\cdots,\allowbreak-t$, the quadratic number field $\mathbb{Q}\left(\sqrt{-(p+m)}\right)$ is $p$-rational for any prime $p>p_m$, where $p_m$ is a positive integer. 

    Let $P$ be the maximal element in the set $\{p_{-1},\cdots,p_{-t}\}$. Then the consecutive imaginary quadratic fields $\mathbb{Q}\left(\sqrt{-(p-1)}\right),\cdots,\mathbb{Q}\left(\sqrt{-(p-t)}\right)$ are simultaneously $p$-rational for any prime $p>P$.
\end{proof}
\begin{remark}
    Our proof of Corollary \ref{CLSThm1.1}, which is Theorem 1.1 of \cite{CLS}, is different from the original. The proof in \cite{CLS} relies on the existence of infinitely many primes $p$ such that $p-1,\cdots,p-t$ simultaneously have a divisor $\ell$ satisfying $\ell>(\log p)^2$. Our approach does not require this analytic assumption, thus simplifying the argument. 

    Furthermore, Theorem \ref{kp+m} constructs additional families of examples relevant to Question 1 in \cite{CLS}, which asks for the existence of $t$ consecutive $p$-rational fields. For instance, our theorem implies that the imaginary quadratic fields $\mathbb{Q}\left(\sqrt{-(kp+1)}\right),\cdots,\allowbreak\mathbb{Q}\left(\sqrt{-(kp+t)}\right)$ are also simultaneously $p$-rational for sufficiently large primes $p$.
\end{remark}

\section{The $p$-rationality of a real quadratic field}
\label{sectionp(p+1)}

We now turn to the case of real quadratic fields. The criterion for $p$-rationality in this setting is different from that for imaginary ones.

\begin{proposition}[\cite{Greenberg} Proposition 4.1(ii) or \cite{BM2021} Corollary 2.6]
    \label{criterionforreal}
    Let $K$ be a real quadratic number field and $p$ be a prime number (where $p\geq5$, or $p=3$ and is unramified in $K/\mathbb{Q}$). Then $K$ is $p$-rational if and only if $p$ does not divide the class number $h_K$ of $K$ and the fundamental unit of $K$ is not a $p$\textsuperscript{th} power in the completion $K_\mathfrak{p}$ for some prime $\mathfrak{p}$ lying above $p$.
\end{proposition}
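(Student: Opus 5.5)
The plan is to deduce the criterion from the standard characterization of $p$-rationality through the torsion of the Galois group of the maximal abelian pro-$p$ extension. Let $M$ be the maximal abelian pro-$p$ extension of $K$ unramified outside $p$, and $\mathcal{T}_p$ the torsion subgroup of $\mathrm{Gal}(M/K)$. Leopoldt's conjecture holds for $K$ because $K$ is abelian over $\mathbb{Q}$. Under Leopoldt, the Movahhedi/Nguyen Quang Do theory gives that $\mathrm{Gal}(K_S/K)$ is free pro-$p$ (of rank $r_2+1=1$) if and only if $\mathcal{T}_p=0$. So the task becomes: show $\mathcal{T}_p=0$ if and only if $p\nmid h_K$ and $\varepsilon\notin K_{\mathfrak p}^{\times p}$.

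First I will control the local units. Set $U=\prod_{\mathfrak p\mid p}U^{(1)}_{\mathfrak p}$. If $\mu_p\subset K_{\mathfrak p}$, the ramification index of $K_{\mathfrak p}/\mathbb{Q}_p$ would be at least $p-1$. Here it is at most $2$, and it equals $1$ when $p=3$ by hypothesis. Hence $U$ is torsion-free, so $U\cong\mathbb{Z}_p^2$.

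Next I will use the class field theory exact sequence
\[
0\to U/\overline{E}\to \mathrm{Gal}(M/K)\to \mathrm{Cl}_K\otimes\mathbb{Z}_p\to 0,
\]
where $\overline{E}$ is the closure of the image of the global units in $U$. Since $-1$ is killed in a pro-$p$ group, $\overline{E}=\mathbb{Z}_p\varepsilon$. The torsion of $U/\overline{E}$ injects into $\mathcal{T}_p$. So $\mathcal{T}_p=0$ forces $U/\mathbb{Z}_p\varepsilon$ to be torsion-free, i.e.\ $\varepsilon\notin U^p$. Conversely, suppose $\varepsilon\notin U^p$ and $p\nmid h_K$. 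Then $\mathrm{Gal}(M/K)\cong U/\mathbb{Z}_p\varepsilon$, which is torsion-free.

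It remains to show that $\mathcal{T}_p=0$ forces $p\nmid h_K$. Here I use that $\mathrm{Gal}(M/K)/\mathcal{T}_p\cong\mathbb{Z}_p$ is the Galois group of the cyclotomic $\mathbb{Z}_p$-extension $K_\infty$ of $K$. Every prime above $p$ is totally ramified in $K_\infty/K$: $p$ is totally ramified in $\mathbb{Q}_\infty/\mathbb{Q}$, and $[K:\mathbb{Q}]=2$ is prime to $p$. Hence the $p$-Hilbert class field meets $K_\infty$ only in $K$. The $p$-Hilbert class field lies in $M$, and $M=K_\infty$ when $\mathcal{T}_p=0$, so it must equal $K$, i.e.\ $p\nmid h_K$.

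Finally I will convert $\varepsilon\notin U^p$ into the stated condition at a single prime. If $p$ is inert or ramified there is only one $\mathfrak p$, and $U^{(1)}_{\mathfrak p}$ is a pro-$p$ group of index prime to $p$ in $\mathcal{O}_{\mathfrak p}^\times$, so being a $p$th power in $U$ and in $K_{\mathfrak p}$ coincide. If $p=\mathfrak p\mathfrak p'$ splits, then $\varepsilon$ is a $p$th power in $U$ exactly when it is one in both completions. The conjugate $\varepsilon'=\pm\varepsilon^{-1}$, with $\pm1$ a $p$th power since $p$ is odd, together with $K_{\mathfrak p'}=\sigma(K_{\mathfrak p})$, shows the two local conditions are equivalent. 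So failure at one prime is the same as failure at all primes, which gives ``for some $\mathfrak p$''.

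The main obstacle I expect is the ramification argument in the third paragraph: showing that $p$-power torsion in the class group really produces torsion in $\mathrm{Gal}(M/K)$. This is where total ramification of $K_\infty/K$ at every prime above $p$ must be checked, including the case where $p\ge5$ ramifies in $K$.
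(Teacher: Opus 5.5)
Your proof is correct. The paper gives no argument of its own for this criterion; it quotes it from Greenberg (Prop.~4.1(ii)) and Benmerieme--Movahhedi (Cor.~2.6). What you wrote is essentially the standard proof behind those references:
\begin{itemize}
\item Leopoldt reduces $p$-rationality to $\mathcal{T}_p=0$.
\item The hypothesis on $p$ is used exactly once, to make $U\cong\mathbb{Z}_p^2$ torsion-free.
\item The sequence $0\to U/\overline{E}\to\mathrm{Gal}(M/K)\to\mathrm{Cl}_K\otimes\mathbb{Z}_p\to 0$ gives the unit condition and sufficiency.
\item Total ramification in $K_\infty/K$ gives necessity of $p\nmid h_K$.
\end{itemize}

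The step you flagged as the main obstacle works as sketched. Since $K\cap\mathbb{Q}_\infty=\mathbb{Q}$, the $n$th layer satisfies $[K_n:K]=p^n$ and $K_n\supseteq\mathbb{Q}_n$. So for $\mathfrak{P}\mid\mathfrak{p}$ in $K_n$ one gets $p^n\mid e(\mathfrak{P}/p)=e(\mathfrak{P}/\mathfrak{p})\,e(\mathfrak{p}/p)$, with $e(\mathfrak{p}/p)\in\{1,2\}$ prime to $p$. This forces $e(\mathfrak{P}/\mathfrak{p})=p^n$, and it covers the case where $p\ge 5$ ramifies in $K$.

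Two minor precision points:
\begin{itemize}
\item $\overline{E}$ is topologically generated by the principal-unit component $\varepsilon_1$ of $\varepsilon$, obtained by discarding its Teichm\"uller part, not by $\varepsilon$ itself. Your equivalence ``$\varepsilon\in K_{\mathfrak p}^{\times p}$ if and only if $\varepsilon_1\in (U^{(1)}_{\mathfrak p})^p$'' is exactly where $p\nmid N(\mathfrak p)-1$ and $v_{\mathfrak p}(\varepsilon)=0$ enter.
\item In the split case the conjugation argument is not needed for the ``for some $\mathfrak p$'' formulation. The condition $\varepsilon_1\notin U^p$ already says that $\varepsilon$ fails to be a $p$th power in at least one of the two completions. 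Your extra argument (via $\varepsilon'=\pm\varepsilon^{-1}$) is still correct and shows that ``some'' and ``all'' coincide here.
\end{itemize}
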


We also introduce a useful proposition for testing the 3-rationality
of a specific class of real quadratic fields.

\begin{proposition}[\cite{BM2021} Proposition 2.9 (ii)]
    \label{criterionfor3}
    Let $F=\mathbb{Q}(\sqrt{d})$ be a quadratic field with $d \neq -3$ square-free. If $d \not\equiv -3 \pmod 9$, then $F$ is $3$-rational if and only if the $3$-primary part of the class group of its mirror field $F'=\mathbb{Q}\left(\sqrt{-3d}\right)$ is trivial.
\end{proposition}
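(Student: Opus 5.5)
The plan is to pass to the biquadratic field $K=F(\sqrt{-3})=\mathbb{Q}(\sqrt{d},\sqrt{-3})$, which contains $\mu_3$ and has Galois group $G=\mathrm{Gal}(K/\mathbb{Q})\cong(\mathbb{Z}/2\mathbb{Z})^2$. Its four characters are $1$, $\chi_F$, $\omega=\chi_{\mathbb{Q}(\sqrt{-3})}$ and $\chi_{F'}=\omega\chi_F$. Since $|G|$ is prime to $3$, every $\mathbb{Z}_3[G]$-module decomposes into $\chi$-isotypic parts.

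First, recall that for an abelian field Leopoldt's conjecture holds. Hence $3$-rationality of a field $L$ is equivalent to the vanishing of the torsion module $\mathcal{T}_L=\mathrm{tor}_{\mathbb{Z}_3}\mathrm{Gal}(L_S^{ab}/L)$. Taking $G$-isotypic components gives
$$\mathcal{T}_K=\mathcal{T}_K^{1}\oplus\mathcal{T}_K^{\chi_F}\oplus\mathcal{T}_K^{\omega}\oplus\mathcal{T}_K^{\chi_{F'}}.$$
Moreover $\mathcal{T}_K^{1}\cong\mathcal{T}_{\mathbb{Q}}=0$, and $\mathcal{T}_K^{\chi_F}\cong\mathcal{T}_F$ via the transfer/restriction, whose composite is multiplication by $2$. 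So $F$ is $3$-rational if and only if $\mathcal{T}_K^{\chi_F}=0$, that is, if and only if $\mathcal{T}_K^{\chi_F}/3=0$.

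Second, I would compute $\mathcal{T}_K^{\chi_F}/3$ by Kummer theory over $K$, which is legitimate because $\mu_3\subset K$. The maximal elementary abelian $3$-extension of $K$ unramified outside $3$ is $K(\sqrt[3]{W})$, where
$$W=\{x\in K^\times/K^{\times3}:\ (x)=\mathfrak{a}^3\cdot(\text{ideal supported above }3)\}.$$
The Kummer pairing is $G$-equivariant up to a twist by $\omega$. Hence the $\chi_F$-part of $\mathrm{Gal}(K_S^{ab}/K)/3$ is dual to $W^{\omega\chi_F^{-1}}=W^{\chi_{F'}}$, and $W^{\chi_{F'}}$ is controlled by the field $F'$. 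Concretely, there is an exact sequence
$$0\to E_S(F')/E_S(F')^3\to W^{\chi_{F'}}\to \mathrm{Cl}_S(F')[3]\to 0.$$
The torsion $\mathcal{T}^{\chi_F}/3$ is the excess of $\dim_{\mathbb{F}_3}W^{\chi_{F'}}$ over the free $\chi_F$-rank of $\mathrm{Gal}(K_S^{ab}/K)$, which equals $1$ since $F$ is real, so that $r_2(F)+1+\delta=1$. Since $F'$ is imaginary, $E(F')\otimes\mathbb{F}_3$ is trivial. The hypothesis $d\neq-3$ (so $F'\neq\mathbb{Q}(\sqrt{-3})\ldots$) excludes the case where $\mu_3\subset F'$. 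Then $E_S(F')/3$ has dimension equal to the number $s'$ of primes of $F'$ above $3$ minus nothing, i.e. $s'$, coming from $3$-units.

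Third, I compare $s'$ and $\mathrm{Cl}_S(F')$ with $\mathrm{Cl}(F')$, and this is where the congruence condition enters. If $3$ is inert or ramified in $F'$, then $s'=1$. Also $\mathrm{Cl}_S(F')[3^\infty]=\mathrm{Cl}(F')[3^\infty]$, because the unique prime above $3$ is either inert, hence principal, or ramified with square $(3)$, hence of order dividing $2$. In that case $\dim W^{\chi_{F'}}=1+\mathrm{rk}_3\mathrm{Cl}(F')$, and $\mathcal{T}_F=0$ if and only if $\mathrm{Cl}(F')[3]=0$, as desired. If instead $3$ splits in $F'$, then $s'=2$ and the count is off by one. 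I expect this to force $\mathcal{T}_F\neq0$ regardless of $\mathrm{Cl}(F')$. The splitting happens when $-3d/9\equiv1\pmod 3$ with $3\mid d$, i.e. $d\equiv-3\pmod9$, which is exactly the excluded case.

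The main obstacle is this third step. I have to make the Kummer-duality bookkeeping precise: the exact dimension of the free part in the $\chi_F$-component, and the local condition at the primes above $3$. A Kummer generator must give an extension unramified outside $3$, which imposes no condition at $3$ itself but does involve the $3$-adic units in the comparison. I also have to verify carefully how $3$ decomposes in $F'$ in each residue class of $d$ modulo $9$ (with $3\nmid d$ versus $3\mid d$), so that the hypothesis $d\not\equiv-3\pmod9$ is exactly what makes the $S$-unit contribution equal to $1$. I would first carry this out with $\mathrm{Cl}_S$ and then check that the passage from $\mathrm{Cl}_S(F')$ to $\mathrm{Cl}(F')$ loses nothing in the $3$-part in the non-split cases.
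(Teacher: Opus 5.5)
The paper gives no proof of this proposition; it is quoted from Benmerieme--Movahhedi, so your argument has to stand on its own. The strategy is right, and your final formula $\mathrm{rk}_3\mathcal{T}_F=(s'-1)+\mathrm{rk}_3\mathrm{Cl}_S(F')$ is correct. However, it comes out of two false intermediate claims that happen to cancel.

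\textbf{First error: the exact sequence for $W^{\chi_{F'}}$.} Your sequence $0\to E_S(F')/3\to W^{\chi_{F'}}\to \mathrm{Cl}_S(F')[3]\to 0$ is wrong. Taking $\mathrm{Gal}(K/F')$-invariants of $0\to E_S(K)/3\to W\to\mathrm{Cl}_S(K)[3]\to0$ gives this sequence for $W^{1}\oplus W^{\chi_{F'}}$, not for $W^{\chi_{F'}}$. Here $W^{1}=W^{\Delta}$ is the line spanned by the $S$-unit $3\in\mathbb{Q}$. That element is $\Delta$-fixed, so it does not lie in $W^{\chi_{F'}}$. Hence, for $F$ real, $\dim W^{\chi_{F'}}=s'-1+\mathrm{rk}_3\mathrm{Cl}_S(F')$.

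\textbf{Second error: the free rank of the $\chi_F$-component.} The free $\mathbb{Z}_3$-rank of the $\chi_F$-component of $\mathrm{Gal}(K_S^{ab}/K)$ is $0$, not $1$. The number $r_2(F)+1=1$ is the rank of $\mathrm{Gal}(F_S^{ab}/F)$, i.e. of the $1\oplus\chi_F$ part. That $\mathbb{Z}_3$ is the cyclotomic one, which lies in the trivial component. The ranks of the components $1,\chi_F,\omega,\chi_{F'}$ are $1,0,1,1$, summing to $r_2(K)+1=3$.

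With both corrections, $\mathrm{rk}_3\mathcal{T}_F=\dim W^{\chi_{F'}}$ and the same formula results. For instance, for $d=3$ one has $W^{\chi_{F'}}=0$, not a line as your sequence predicts.

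\textbf{Missing case: $d<0$.} The statement covers every square-free $d\neq-3$, including $d<0$; that is why $d=-3$ must be excluded. Your argument uses that $F$ is real and $F'$ imaginary. For $F$ imaginary, $F'$ is real and its fundamental unit contributes to $E_S(F')/3$, so your claim that $E(F')\otimes\mathbb{F}_3$ is trivial fails. The correct count is $\dim W^{\chi_{F'}}=s'+\mathrm{rk}_3\mathrm{Cl}_S(F')$ against a free $\chi_F$-rank of $1$. This again gives the same formula, but the case must be treated.

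\textbf{A uniform alternative.} You can avoid units and free ranks altogether, independently of the signature:
\begin{itemize}
\item By Leopoldt, $\mathrm{rk}_3\mathcal{T}_K^{\chi}=\dim H^2(G_S(K),\mathbb{Z}/3)^{\chi}=\dim H^2(G_S(K),\mu_3)^{\omega\chi}$.
\item There is an exact sequence $0\to\mathrm{Cl}_S(K)/3\to H^2(G_S(K),\mu_3)\to\ker\bigl(\bigoplus_{v\mid 3}\mathbb{Z}/3\to\mathbb{Z}/3\bigr)\to0$.
\item Taking $\chi_{F'}$-parts gives $\mathrm{Cl}_S(F')/3$ from the first term. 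The last term contributes a line exactly when $3$ splits in $F'$.
\end{itemize}

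Your third step is correct: in the non-split case $s'=1$ and $\mathrm{Cl}_S(F')[3^\infty]=\mathrm{Cl}(F')[3^\infty]$, and splitting occurs iff $d\equiv-3\pmod 9$.

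A minor point: $d\neq-3$ is what makes $F'$ a quadratic field, so that $K$ is biquadratic. The condition $\mu_3\not\subset F'$ is automatic, since $F'=\mathbb{Q}(\sqrt{-3})$ would force $d=1$.
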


By applying these criteria, we can establish the $p$-rationality of the field $\mathbb{Q}\left(\sqrt{p(p+1)}\right)$. We note that this specific real quadratic field was briefly mentioned in Benmerieme's thesis \cite{Benmerieme2021} as a subfield of the bi-quadratic extension $\mathbb{Q}\left(\sqrt{p(p+1)}, \sqrt{p(p-1)}\right)$. Here, we provide a direct and explicit proof of the following result. 

\begin{theorem}
    \label{p(p+1)}
    For any odd prime $p$, the real quadratic field $K=\mathbb{Q}\left(\sqrt{p(p+1)}\right)$ is $p$-rational.
\end{theorem}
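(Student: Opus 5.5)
The plan is to split into $p=3$ and $p\ge 5$. For $p=3$ we use Proposition \ref{criterionfor3}. For $p\ge5$ we use Proposition \ref{criterionforreal}, which needs two checks: $p\nmid h_K$, and the fundamental unit is not a $p$\textsuperscript{th} power in $K_\mathfrak{p}$ for the prime $\mathfrak{p}$ above $p$. Since $\gcd(p,p+1)=1$, the square-free part $D$ of $p(p+1)$ is divisible by $p$. Hence $p$ is ramified in $K$, with $p\mathcal{O}_K=\mathfrak{p}^2$.

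\emph{Case $p=3$.} Here $K=\mathbb{Q}(\sqrt{12})=\mathbb{Q}(\sqrt3)$, so $d=3\not\equiv -3 \pmod 9$. The mirror field is $\mathbb{Q}(\sqrt{-9})=\mathbb{Q}(i)$, which has class number $1$. Proposition \ref{criterionfor3} then gives $3$-rationality immediately.

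\emph{Unit condition for $p\ge5$.} The explicit unit
$$u=(2p+1)+2\sqrt{p(p+1)}=\left(\sqrt p+\sqrt{p+1}\right)^2$$
has norm $(2p+1)^2-4p(p+1)=1$. Write $u=\pm\varepsilon_0^{\,n}$ with $\varepsilon_0$ the fundamental unit. If $\varepsilon_0$ were a $p$\textsuperscript{th} power in $K_\mathfrak{p}$, then so would be $u$, so it suffices to treat $u$.

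Put $\pi=\sqrt{p(p+1)}$. Since $v_\mathfrak{p}(\pi)=1$ and $v_\mathfrak{p}(p)=2$, we get $u=1+2\pi+2p\equiv 1+2\pi \pmod{\mathfrak{p}^2}$. Thus $u\in U_\mathfrak{p}^{(1)}\setminus U_\mathfrak{p}^{(2)}$.

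Now take $x\in K_\mathfrak{p}^\times$ with $x^p\in U_\mathfrak{p}^{(1)}$. Then $x$ is a unit, and because the residue field is $\mathbb{F}_p$, $x$ is a root of unity times an element of $U^{(1)}_\mathfrak{p}$. For $y\in U_\mathfrak{p}^{(1)}$ with $e=2<p-1$, the binomial expansion gives $y^p\in U_\mathfrak{p}^{(\min(p,\,1+e))}=U_\mathfrak{p}^{(3)}$. The $p$\textsuperscript{th} power of the root-of-unity factor is a prime-to-$p$ root of unity, and being $\equiv1\pmod{\mathfrak p}$ it must equal $1$. So every $p$\textsuperscript{th} power lying in $U_\mathfrak{p}^{(1)}$ lies in $U_\mathfrak{p}^{(2)}$, and $u$ is not a $p$\textsuperscript{th} power. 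This part is clean.

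\emph{Class number for $p\ge5$.} The remaining and main obstacle is showing $p\nmid h_K$. My first attempt is analytic, using the class number formula
$$h_KR_K=\tfrac12\sqrt{d_K}\,L(1,\chi_K),$$
together with $d_K\le 4p(p+1)$ and an upper bound of the form $L(1,\chi_K)\le \tfrac14\log d_K+c$ (Louboutin-type; sharper when $2$ ramifies). For the regulator, write $\varepsilon_0=(x+y\sqrt{D})/2$ with $y\ge1$; then $\varepsilon_0\ge\sqrt D\ge\sqrt p$. Since $u\approx 4p$, this forces $n\le 2$ for large $p$, so $R_K\ge\tfrac12\log(4p+2)$.

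Plugging these in gives only $h_K\lesssim 2p$, which is not quite enough. I would try to close the gap in one of two ways. The first is to show $n=1$, i.e.\ that $u$ is fundamental. This holds when $D\ne p(p+1)$ or by a direct check that $u$ is not a square in $\mathcal{O}_K$; the only square root candidate is $\sqrt p+\sqrt{p+1}\notin K$, up to the possibility that $\varepsilon_0$ has norm $-1$. With $n=1$ the bound halves to roughly $h_K\lesssim p$. The second is to use a sharper bound for $L(1,\chi_K)$ exploiting that $\chi_K(2)$ and $\chi_K(3)$ are determined by $D$. Either way the aim is $h_K<p$. If that still only yields $h_K\le p$, the case $h_K=p$ would need separate exclusion, for instance via genus theory: the $2$-rank is at least $1$ since $D$ has at least two prime factors, so $h_K$ is even and $h_K\ne p$.

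Small primes $p\ge5$ below the effective threshold would be checked by direct computation.
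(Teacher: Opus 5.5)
Your treatment of $p=3$ and of the local unit condition is correct. Reducing to $u=2p+1+2\sqrt{p(p+1)}$ is cleaner than the paper: if $\varepsilon_0$ were a $p$\textsuperscript{th} power in $K_\mathfrak{p}$, so would $u=\varepsilon_0^n$ be, so you never need fundamentality for this step. The paper instead first proves $u$ fundamental by a long trace case analysis.

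The genuine gap is the step $p\nmid h_K$. You flag it as the main obstacle but only list possible repairs, and two of your ingredients are wrong.
\begin{itemize}
\item The bound $L(1,\chi_K)\le\frac14\log d_K+c$ is only available when $\chi_K(2)=0$. In general the coefficient is $\frac12$, which is what your figure $h_K\lesssim 2p$ actually reflects.
\item The genus-theory fallback is false. Genus theory makes the \emph{narrow} class number even, but here the fundamental unit has norm $+1$, so $h_K=h_K^+/2$ may be odd. For $p=7$ one gets $K=\mathbb{Q}(\sqrt{56})=\mathbb{Q}(\sqrt{14})$ with $h_K=1$, although $D=14$ has two prime factors.
\end{itemize}
So you cannot settle for $h_K\le p$. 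You need the strict bound $h_K<p$ for every $p\ge5$, with explicit constants rather than an unspecified threshold.

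Your route to $n=1$ is the right idea, but it should be argued directly. The remark about $D\ne p(p+1)$ is unjustified, and the norm $-1$ caveat is vacuous. The argument runs as follows.
\begin{itemize}
\item If $\sqrt p+\sqrt{p+1}\in K$, then so is its inverse $\sqrt{p+1}-\sqrt p$, hence $\sqrt{p+1}\in K$. That forces $p+1$ to be a square, which is impossible for $p\ge5$.
\item So $u$ is not a square in $K$, $n$ is odd, and $N(\varepsilon_0)=+1$.
\item Then $\varepsilon_0>\sqrt D\ge\sqrt{2p}$, which gives $\varepsilon_0^3>4p+2>u$.
\item Thus $n=1$ and $R_K=\log u>\frac12\log(16p(p+1))$.
\end{itemize}
This is much shorter than the paper's proof of fundamentality.

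The paper then finishes by splitting on $p \bmod 4$.
\begin{itemize}
\item If $p\equiv3\pmod 4$, then $4\mid p+1$ gives $d_K\le p(p+1)$. Le's bound $h_K\le\sqrt{d_K}/2$ then yields $h_K<p$ with no regulator input at all.
\item If $p\equiv1\pmod4$, then $2\mid D$, so $2$ ramifies and Louboutin's bound $L(1,\chi_K)\le\frac14(\log d_K+\kappa_2)$ applies legitimately. Combined with $R_K=\log u$, the class number formula gives $h_K<p$.
\end{itemize}
Alternatively, once you have $n=1$, you can use a general explicit bound $L(1,\chi_K)\le\frac12(\log d_K+\kappa)$ for even primitive characters. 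Louboutin gives $\kappa=2+\gamma-\log 4\pi\approx0.046$. With $d_K\le4p(p+1)$ this gives $h_K<\sqrt{p(p+1)}\,\frac{\log(4p(p+1))+\kappa}{\log(16p(p+1))}<p$ for all $p\ge5$ at once.
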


\begin{proof}
    For $p=3$, the field reduces to $K=\mathbb{Q}\left(\sqrt{12}\right)=\mathbb{Q}\left(\sqrt{3}\right)$. Since the prime $3$ is ramified in $K$, we cannot apply Proposition \ref{criterionforreal}. Instead, we determine its $3$-rationality by using Proposition \ref{criterionfor3}. Since $d=3 \not\equiv -3 \pmod 9$, the $3$-rationality of $\mathbb{Q}\left(\sqrt{3}\right)$ is equivalent to the triviality of the $3$-primary part of the class group of its mirror field $K'=\mathbb{Q}\left(\sqrt{-3 \cdot 3}\right)=\mathbb{Q}(i)$. Since the class number of $\mathbb{Q}(i)$ is $1$, its $3$-primary part of the class group is automatically trivial. It follows that $K$ is $3$-rational.
    
    For the case $p > 3$, our proof strategy is inspired by the techniques used in \cite[Proposition 3.2]{BM2021}. We first prove the fundamental unit of $K$ is not a $p$\textsuperscript{th} power in the completion $K_\mathfrak{p}$ for some prime $\mathfrak{p}$ of $K$ lying above $p$.
    
    We claim that $\varepsilon = 2p+1+2\sqrt{p(p+1)}$ is the fundamental unit of the quadratic field $\mathbb{Q}\left(\sqrt{p(p+1)}\right)$ for $p>3$. Let $d$ denote the square-free part of $p(p+1)$. 
    
    CASE 1. If $d\equiv2,3\pmod{4}$, then the fundamental unit is of the form $x+y\sqrt{d}$, with $x,y$ positive integers. There would exist an integer $n \ge 2$ such that 
    \begin{align*}
        2(2p+1)=\mathrm{Tr}(\varepsilon)=\mathrm{Tr}((x+y\sqrt{d})^n)\geq2x^n+2x^{n-2}y^2d,
    \end{align*}
    where $\mathrm{Tr}:=\mathrm{Tr}_{K/\mathbb{Q}}$ denotes the trace map in ${\mathbb{Q}\left(\sqrt{p(p+1)}\right)/\mathbb{Q}}$. The inequality can be written as 
    \begin{align}
    \label{pinequality1}
        p\geq \frac{x^n-1}{2}+\frac{x^{n-2}y^2d}{2}.
    \end{align}
    Since $p\mid d$ and $x,y$ are positive integers, this inequality does not hold for $y\geq2$.
    
    If $x=y=1$, then we have $p\geq \frac{d}{2}$, that is $d=p$ or $2p$. However, this is impossible since neither $1+\sqrt{p}$ nor $1+\sqrt{2p}$ is even a unit.

    If $y=1,x\geq2$, then the inequality (\ref{pinequality1}) is $p\geq \frac{x^n-1}{2}+\frac{x^{n-2}d}{2}>\frac{x^{n-2}d}{2}$. Then it is clear that we have $n=2,d=p$. This implies $(x+\sqrt{p})^2=2p+1+\sqrt{p(p+1)}$, which is solvable only if $p+1$ is a square number. However, $p+1$ is not square for $p>3$.

    CASE 2. If $d\equiv1\pmod{4}$, then the fundamental unit is of the form $x+y\frac{1+\sqrt{d}}{2}=(x+\frac{y}{2})+\frac{y}{2}\sqrt{d}$, with $x,y$ positive integers. There would exist an integer $n \ge 2$ such that 
    \begin{align*}
        2(2p+1)=&\mathrm{Tr}(\varepsilon)=\mathrm{Tr}\left(\left(x+\frac{y}{2}+\frac{y}{2}\sqrt{d}\right)^n\right)\\
        \geq&2\left(x+\frac{y}{2}\right)^n+2\left(x+\frac{y}{2}\right)^{n-2}\left(\frac{y}{2}\right)^{2}d,
    \end{align*}
    that is,
    \begin{align}
    \label{pinequality2}
        p\geq\frac{\left(x+\frac{y}{2}\right)^n-1}{2}+\frac{\left(x+\frac{y}{2}\right)^{n-2}\left(\frac{y}{2}\right)^{2}d}{2}.
    \end{align}
    Since $p\mid d$ and $x,y$ are positive integers, this inequality does not hold for $y\geq3$.

    If $n=2$, then the inequality (\ref{pinequality2}) is $p\geq\frac{\left(x+\frac{y}{2}\right)^2-1}{2}+\frac{y^{2}d}{8}$, which implies $d\leq 8p$. Note that $d$ is the square-free part of $p(p+1)$ and $d\equiv1\pmod{4}$, we have $p\equiv 3\pmod{4}$. Then $d=3p$ or $7p$ and thus $y=1$. This means that the fundamental unit is of the form $x+y\sqrt{d}=x+\sqrt{3p}$ or $x+\sqrt{7p}$, but it is impossible since $\left(x+y\sqrt{d}\right)^2=2p+1+\sqrt{p(p+1)}$ is unsolvable in both cases. 

    If $n\geq 3$, then the inequality (\ref{pinequality2}) is $p\geq\frac{\left(x+\frac{y}{2}\right)^3-1}{2}+\frac{\left(x+\frac{y}{2}\right)\left(\frac{y}{2}\right)^{2}d}{2}>\frac{3d}{16}$. Using the same method as before, we have $d=3p$, $y=1$ and $x\leq7$. Now, the fundamental unit is of the form $x+y\sqrt{d}=x+\sqrt{3p}$, which is impossible since it is not even a unit for $x\leq7$ and $p\equiv3\pmod{4}$.

    Note that for a prime $\mathfrak{p}$ lying above $p$, we have $\varepsilon=2p+1+2\sqrt{p(p+1)} \in U_{\mathfrak{p}}^{(1)} \setminus U_{\mathfrak{p}}^{(2)}$, where $U_{\mathfrak{p}}^{(i)}$ denotes the $i$\textsuperscript{th} higher unit group in the local field $K_{\mathfrak{p}}$. If $\varepsilon$ were a $p$\textsuperscript{th} power in $K_{\mathfrak{p}}$, then we have $\varepsilon=\alpha^p$ for some $\alpha\in K_\mathfrak{p}$. Since $\varepsilon\in U_\mathfrak{p}^{(1)}$ and the norm $N(\mathfrak{p})=p$, we have $1\equiv\varepsilon\equiv\alpha^p\equiv\alpha\pmod{\mathfrak{p}O_K}$, which implies $\alpha \in U_{\mathfrak{p}}^{(1)}$. However, according to \cite[Proposition 9]{serre}, raising to the $p$\textsuperscript{th} power induces an isomorphism $U_{\mathfrak{p}}^{(1)} \cong U_{\mathfrak{p}}^{(3)}$. This implies $\varepsilon = \alpha^p\in U_{\mathfrak{p}}^{(3)} \subset U_{\mathfrak{p}}^{(2)}$, which contradicts the fact that $\varepsilon \notin U_{\mathfrak{p}}^{(2)}$. Thus, $\varepsilon$ is indeed not a $p$\textsuperscript{th} power in $K_{\mathfrak{p}}$.
    
    It remains to show that $p$ does not divide the class number $h_K$. 
    
    If $p\equiv 3\pmod{4}$, then $p(p+1)$ is divisible by $4$ and $d\leq \frac{p(p+1)}{4}$. By \cite[Theorem (a)]{Le}, we have $h_K\leq\frac{\sqrt{4d}}{2}\leq\sqrt{d}< p$. 

    If $p\equiv1\pmod{4}$, then we have $2\mid d$. Since 2 ramifies in $K$, by \cite[Corollary 2]{Louboutin2004}, it leads to 
    $$L(1,\chi_K)\leq \frac{\log d+\kappa_2}{4},$$
    where $\kappa_2:=2+\gamma-\log\pi=1.432\cdots$, and $\gamma=0.577\cdots$ is the Euler's constant. 

    Then by Dirichlet's class number formula $h_K=\frac{L(1,\chi_K)\sqrt{d}}{2R_K}$ (note that $R_K=\log(\varepsilon)$ is the regulator of $K$), we have
    \begin{align*}
        h_K &<\frac{\log (p(p+1))+2}{4}\cdot\frac{\sqrt{p(p+1)}}{2\log(2p+1+2\sqrt{p(p+1)})}\\
        &<\frac{\log (p(p+1))+2}{4}\cdot\frac{\sqrt{p(p+1)}}{2\log(2\sqrt{p(p+1)})}\\
        &<\frac{\log (p(p+1))+2}{4\log(4p(p+1))}\cdot\sqrt{p(p+1)}\\
        &<\left(\frac{1}{4}+\frac{1}{2\log(4p(p+1))}\right)\cdot\sqrt{p(p+1)}\\
        &<p.
    \end{align*}
    Hence we have $p\nmid h_K$. This completes the proof. 
\end{proof}

\subsection{Pairs of $p$-rational real quadratic fields in arithmetic progression}
In \cite{CLS}, Chattopadhyay, Laxmi and Saikia investigated the existence of consecutive real quadratic fields that are $p$-rational. Specifically, they established the $p$-rationality of pairs such as $\left(\mathbb{Q}\left(\sqrt{n}\right),\mathbb{Q}\left(\sqrt{n+1}\right)\right)$, where $n=p^2-2$ or $n=p^2+1$.

However, as illustrated by Question 1 in \cite{CLS} and related conjectures in the literature (e.g., regarding class number divisibility), it is natural to extend this inquiry to sequences of quadratic fields in arithmetic progression, i.e., fields of the form $\mathbb{Q}(\sqrt{d}),\mathbb{Q}(\sqrt{d+b}),\cdots,\mathbb{Q}(\sqrt{d+nb})$ with a given positive integer $n$. Indeed, in the imaginary case, Chattopadhyay, Laxmi and Saikia \cite{CLS} considered families of the form $\mathbb{Q}\left(\sqrt{-p(p-1)}\right),\mathbb{Q}\left(\sqrt{-p(p-n)}\right)$, which correspond to such progressions.

Motivated by this broader perspective, we observe that Theorem \ref{p(p+1)} allows us to construct such examples in the real case. Benmerieme and Movahhedi \cite{BM2021} proved that the fields $\mathbb{Q}\left(\sqrt{p(p-1)}\right),\mathbb{Q}\left(\sqrt{p(p-2)}\right)$ and $\mathbb{Q}\left(\sqrt{p(p+2)}\right)$ are $p$-rational for $p>3$. Combining their results with our Theorem \ref{p(p+1)}, we identify new pairs of $p$-rational real quadratic fields. 

\begin{corollary}
    For any prime $p>3$, the following pairs of real quadratic fields are simultaneously $p$-rational:
    \begin{enumerate}
        \item $\left(\mathbb{Q}\left(\sqrt{p(p-2)}\right),\mathbb{Q}\left(\sqrt{p(p-1)}\right)\right);$
        \item $\left(\mathbb{Q}\left(\sqrt{p(p+1)}\right),\mathbb{Q}\left(\sqrt{p(p+2)}\right)\right).$
    \end{enumerate}
\end{corollary}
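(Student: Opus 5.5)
The corollary is a direct combination of two results, so the plan is simply to assemble them. For each prime $p>3$ we need every field in each pair to be $p$-rational. The fields $\mathbb{Q}\left(\sqrt{p(p-1)}\right)$, $\mathbb{Q}\left(\sqrt{p(p-2)}\right)$ and $\mathbb{Q}\left(\sqrt{p(p+2)}\right)$ are $p$-rational for all $p>3$ by the results of Benmerieme and Movahhedi \cite{BM2021} quoted just before the statement. The field $\mathbb{Q}\left(\sqrt{p(p+1)}\right)$ is $p$-rational for every odd prime $p$, and in particular for $p>3$, by Theorem \ref{p(p+1)}. Hence both members of pair~1 are $p$-rational, and so are both members of pair~2.

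The only check I would make is that each pair consists of two genuinely distinct real quadratic fields. Otherwise the statement would be vacuous, though it would not be false. All of $p(p-2)$, $p(p-1)$, $p(p+1)$, $p(p+2)$ are positive non-squares for $p>3$, so each generates a real quadratic field.

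For distinctness, two fields $\mathbb{Q}(\sqrt{a})$ and $\mathbb{Q}(\sqrt{b})$ coincide exactly when $ab$ is a square. For pair~1 this product is $p^2(p-1)(p-2)$, and for pair~2 it is $p^2(p+1)(p+2)$. In each case we need the product of two consecutive integers $\geq 2$ not to be a square. That holds because $n^2<n(n+1)<(n+1)^2$.

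I expect no real obstacle here. All the substantive work, namely the fundamental unit computation and the class number bounds, is already contained in Theorem \ref{p(p+1)} and the cited results of \cite{BM2021}. Beyond assembling them, the proof only requires the hypothesis $p>3$, which the cited results of \cite{BM2021} need.
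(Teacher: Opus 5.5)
Your proposal is correct and follows the paper's own argument: the corollary combines the cited results of \cite{BM2021} for $\mathbb{Q}\left(\sqrt{p(p-1)}\right)$, $\mathbb{Q}\left(\sqrt{p(p-2)}\right)$ and $\mathbb{Q}\left(\sqrt{p(p+2)}\right)$ with Theorem \ref{p(p+1)} for $\mathbb{Q}\left(\sqrt{p(p+1)}\right)$. Your extra check that the two fields in each pair are distinct, via $n^2<n(n+1)<(n+1)^2$, is also correct; the paper leaves this implicit.
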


\subsection{New families of $p$-rational triquadratic number field}

Now, we apply Theorem \ref{p(p+1)} to construct a new family of $p$-rational triquadratic number fields. The study of such fields was discussed by Benmerieme and Movahhedi \cite{BM2021}, who suggested considering triquadratic fields of the form $\mathbb{Q}\left(\sqrt{p(p-2)}, \sqrt{p(p+2)},\allowbreak \sqrt{-\alpha}\right)$. Following this direction, Koperecz \cite{Ko} successfully proved that the triquadratic number field $\mathbb{Q}\left(\sqrt{p(p-2)},\allowbreak \sqrt{p(p+2)}, \sqrt{-1}\right)$ is $p$-rational for infinitely many primes $p$, which relied on the existence of large square factors of $p-2$ and $p+2$. Chattopadhyay, Laxmi and Saikia \cite{CLS} generalized this result to fields $\mathbb{Q}\left(\sqrt{p(p-2)}, \sqrt{p(p+2)}, \sqrt{-\alpha}\right)$ using the same analytic method.

To construct our new family of fields, we require the large square factor result established in \cite{CLS}.

\begin{proposition}[\cite{CLS}, Proposition 4]
    \label{factors}
    Given a real number $A>0$ and finitely many nonzero integers $r_1,\cdots,r_s$, there exist infinitely many prime numbers $p$ such that for each $i \in \{1,\dots, s\}$, $p-r_i$ possesses a factor $m_i^2$ with  $m_i > (\log p)^A$.
\end{proposition}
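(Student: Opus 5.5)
The statement is Proposition \ref{factors}, quoted from \cite{CLS}. The plan is to prove it by a sieve-type counting argument over primes in arithmetic progressions.

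Fix $A>0$ and a large parameter $x$. Choose the target size $M=(\log x)^{A'}$ with $A'>A$. For each $i$, we want a modulus $m_i\in(M,2M]$ with $m_i^2\mid p-r_i$. We take the $m_i$ to be distinct primes $q_i$, all coprime to one another and to every $r_j$. Then the simultaneous conditions $p\equiv r_i \pmod{q_i^2}$ combine, by the Chinese remainder theorem, into one congruence $p\equiv a\pmod{Q}$, where $Q=\prod_i q_i^2\le (2M)^{2s}=(\log x)^{O(1)}$. The class $a$ is coprime to $Q$ because $q_i\nmid r_i$.

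Since $Q$ is only a power of $\log x$, the Siegel--Walfisz theorem gives
\[
\pi(x;Q,a)=\frac{\mathrm{li}(x)}{\varphi(Q)}+O\!\left(x\exp(-c\sqrt{\log x})\right),
\]
uniformly for $Q\le(\log x)^B$. The main term is $\gg x/(\log x)^{1+2sA'}$, which dominates the error, so $\pi(x;Q,a)\to\infty$. Every prime $p\le x$ counted here, with $p$ in $(x/2,x]$ say, satisfies $q_i^2\mid p-r_i$ and $q_i>M=(\log x)^{A'}\ge(\log p)^{A}$. Letting $x\to\infty$ yields infinitely many such primes, since the set of primes produced for a given $x$ is nonempty and these primes are larger than $x/2$.

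A simpler route uses no choice among many moduli at all. Fix distinct primes $q_1,\dots,q_s$ not dividing $\prod r_j$, but choose them depending on $x$: let each $q_i$ lie just above $(\log x)^{A}$, which is possible by Bertrand's postulate. Apply Siegel--Walfisz as above to get some prime $p\in(x/2,x]$ in the combined class; then take a sequence $x_n\to\infty$.

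The main obstacle is the uniformity: the moduli grow with $x$, so the plain Dirichlet theorem does not suffice. We need the Siegel--Walfisz range $Q\le(\log x)^B$ with $B=2s(A+1)$. This is ineffective, but the statement is only an existence claim, so that is acceptable. The remaining details are routine:
\begin{enumerate}
\item the $q_i$ are pairwise distinct, and each exceeds every $|r_j|$ once $x$ is large, which guarantees the coprimality conditions;
\item $\log p\ge\log(x/2)$ is comparable to $\log x$, so $q_i>(\log x)^A\ge(\log p)^A$ holds for all large $x$.
\end{enumerate}
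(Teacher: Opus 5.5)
The paper gives no proof of this statement. It is quoted from \cite{CLS} and used only as a black-box input to Proposition~\ref{trialpha}, so there is no internal argument to compare yours against.

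Your proof is correct and self-contained, and it is the natural argument. You choose distinct primes $q_1,\dots,q_s$ just above $(\log x)^A$ and above $\max_j|r_j|$. This is exactly where $r_j\neq 0$ enters: it gives $q_i\nmid r_i$, hence $\gcd(a,Q)=1$. You then glue the conditions $p\equiv r_i\pmod{q_i^2}$ by the Chinese remainder theorem into one class modulo $Q=(\log x)^{O(1)}$. Siegel--Walfisz finishes it, since its main term $x/(\log x)^{O(1)}$ swamps the error $x\exp(-c\sqrt{\log x})$. Your two ``routes'' are really the same argument, and the auxiliary exponent $A'>A$ is unnecessary.

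A few details should be stated more carefully.

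\emph{Distinct primes.} Bertrand's postulate supplies only one prime in $(y,2y]$. To get $s$ distinct $q_i$, use the prime number theorem, or take the $q_i$ in $(y,2^s y]$. The latter still keeps $Q\le(\log x)^{2s(A+1)}$ for large $x$.

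\emph{Direction of the inequality.} The bound $q_i>(\log x)^A\ge(\log p)^A$ uses $p\le x$, not $\log p\ge\log(x/2)$. The lower restriction $p>x/2$ serves a different purpose: it ensures $p\neq r_i$, so that $q_i^2\mid p-r_i$ is a genuine condition, and that the primes produced tend to infinity. To get primes in $(x/2,x]$ you should apply Siegel--Walfisz at both $x$ and $x/2$.

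\emph{Effectivity (optional).} You could replace Siegel--Walfisz by Linnik's theorem in its quantitative form, which yields primes $p\equiv a\pmod Q$ other than $r_i$ with $p\ll Q^{L}$. This gives $\log p\ll_s\log q_i$, hence $q_i>(\log p)^A$ once the $q_i$ are large, and makes the result effective. This is not needed, since the statement is purely existential.
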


We now consider a variant of the fields suggested in \cite{BM2021}.

\begin{proposition}
\label{trialpha}
    Let $\beta$ be a positive integer such that the quadratic field $\mathbb{Q}\left(\sqrt{-\beta}\right)$ is $p$-rational. There exist infinitely many primes $p$ such that the triquadratic number field $K_\beta=\allowbreak\mathbb{Q}\left(\sqrt{p(p+1)},\allowbreak\sqrt{p(p-1)},\sqrt{-\beta}\right)$ is $p$-rational. 
\end{proposition}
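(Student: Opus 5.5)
We will follow Koperecz's approach \cite{Ko}, as adapted in \cite{CLS}. The starting point is the standard criterion for multiquadratic fields used in \cite{BM2021,Ko}. For $p\ge5$, an abelian field of exponent $2$ is $p$-rational if and only if all of its quadratic subfields are $p$-rational. The same criterion, applied to the biquadratic fields, also yields the $p$-rationality of the three relevant biquadratic subfields. So it suffices to show that, for infinitely many primes $p$, the seven quadratic subfields of $K_\beta$ are all $p$-rational. These subfields are
\[
\mathbb{Q}\left(\sqrt{p(p+1)}\right),\ \mathbb{Q}\left(\sqrt{p(p-1)}\right),\ \mathbb{Q}\left(\sqrt{(p-1)(p+1)}\right),\ \mathbb{Q}\left(\sqrt{-\beta}\right),
\]
\[
\mathbb{Q}\left(\sqrt{-\beta p(p+1)}\right),\ \mathbb{Q}\left(\sqrt{-\beta p(p-1)}\right),\ \mathbb{Q}\left(\sqrt{-\beta (p^2-1)}\right).
\]
We take $p$ large, so $p\ge5$ and $p\nmid\beta$.

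The real subfields are handled as follows. $\mathbb{Q}(\sqrt{p(p+1)})$ is $p$-rational by Theorem \ref{p(p+1)}, and $\mathbb{Q}(\sqrt{p(p-1)})$ is $p$-rational by \cite{BM2021}. $\mathbb{Q}(\sqrt{p^2-1})$ is also covered by \cite{BM2021}; if one prefers a direct argument, it mimics the proof of Theorem \ref{p(p+1)}. The unit $p+\sqrt{p^2-1}$ is congruent to $\sqrt{-1}$ modulo a prime above $p$, where $p$ is unramified. Its square $2p^2-1+2p\sqrt{p^2-1}$ lies in $U^{(1)}\setminus U^{(2)}$ at that prime, so the unit is not a local $p$th power. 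The class number is bounded by $\sqrt{d}\log d/\log(2p)<p$, by the class number formula and \cite{Louboutin2004}, as before. The field $\mathbb{Q}(\sqrt{-\beta})$ is $p$-rational by hypothesis.

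The three imaginary fields $\mathbb{Q}(\sqrt{-\beta p(p\pm1)})$ and $\mathbb{Q}(\sqrt{-\beta(p^2-1)})$ are the main obstacle. Their discriminants have size about $\beta p^2$, so Proposition \ref{hk for imaginary field} only gives $h\ll p\log p$, which is not enough. This is where Proposition \ref{factors} enters. Apply it with $r_1=1$, $r_2=-1$ and $A=2$ to obtain infinitely many primes $p$ such that $m_1^2\mid p-1$ and $m_2^2\mid p+1$ with $m_i>(\log p)^2$. Then the square-free parts satisfy
\[
\text{sf}(\beta p(p-1))\le \beta p^2/m_1^2,\qquad \text{sf}(\beta p(p+1))\le\beta p^2/m_2^2,\qquad \text{sf}(\beta(p^2-1))\le\beta p^2/(m_1m_2)^2.
\]
Each such field therefore has $|d|\le 4\beta p^2/(\log p)^4$. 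Proposition \ref{hk for imaginary field} then gives $h\ll \sqrt{\beta}\,p(\log p)^{-2}\cdot\log p=o(p)$, so $h<p$ for $p$ large.

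Proposition \ref{criterionforimaginary} now shows that each of these three imaginary fields is $p$-rational. Combining this with the real case and with $\mathbb{Q}(\sqrt{-\beta})$, all seven quadratic subfields are $p$-rational for the infinitely many primes $p$ produced by Proposition \ref{factors} beyond some bound. The multiquadratic criterion then gives the $p$-rationality of $K_\beta$. The points to check carefully are the exact form of the multiquadratic criterion being invoked, and that the square factors survive passing to the square-free part. The latter is clear, since removing a square factor only decreases the square-free part.
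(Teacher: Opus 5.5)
Your proposal is correct and follows essentially the same route as the paper. Greenberg's criterion reduces everything to the seven quadratic subfields; the real ones and $\mathbb{Q}(\sqrt{-\beta})$ are handled by Theorem~\ref{p(p+1)}, \cite{BM2021} and the hypothesis; and Proposition~\ref{factors} with $A=2$ together with Louboutin's bound gives $h<p$ for the three remaining imaginary fields (the paper additionally records the extra saving for $\mathbb{Q}(\sqrt{-\beta(p^2-1)})$, which your uniform bound does not need).

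Your optional direct argument for $\mathbb{Q}(\sqrt{p^2-1})$ is not needed, since you cite \cite{BM2021}, but as written it has two slips:
\begin{itemize}
\item Since $(p+\sqrt{p^2-1})^2\equiv -1 \pmod{\mathfrak{p}}$, it is $-\varepsilon^2$, not $\varepsilon^2$, that lies in $U_{\mathfrak{p}}^{(1)}\setminus U_{\mathfrak{p}}^{(2)}$.
\item The unit $p+\sqrt{p^2-1}$ need not be fundamental (e.g.\ $7+4\sqrt{3}=(2+\sqrt{3})^2$), so your regulator lower bound $\log(2p)$ is unjustified. The step is repaired by the bound $h_K\le\sqrt{d_K}/2\le\sqrt{p^2-1}/2<p$ from \cite{Le}, using $8\mid p^2-1$.
\end{itemize}
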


\begin{proof}
        According to Greenberg \cite[Proposition 3.6]{Greenberg}, for a given positive integer $\beta$, in order to prove the $p$-rationality of $K_\beta=\mathbb{Q}\left(\sqrt{p(p+1)},\sqrt{p(p-1)},\sqrt{-\beta}\right)$ for infinitely many primes $p$, we need to show that all quadratic subfields of $K_\beta$ are $p$-rational. For $p>3$, the $p$-rationality of $\mathbb{Q}\left(\sqrt{p(p-1)}\right)$ and $\mathbb{Q}\left(\sqrt{(p+1)(p-1)}\right)$ has been proved in \cite{Greenberg,BR,BM2021}, and the $p$-rationality of $\mathbb{Q}\left(\sqrt{p(p+1)}\right)$ is proved in Theorem \ref{p(p+1)}. Hence, we only need to show that there exist infinitely many primes $p$ such that $K_0=\mathbb{Q}\left(\sqrt{-\beta}\right),K_1=\mathbb{Q}\left(\sqrt{-\beta p(p+1)}\right),K_2=\mathbb{Q}\left(\sqrt{-\beta p(p-1)}\right)$ and $K_3=\mathbb{Q}\left(\sqrt{-\beta(p+1)(p-1)}\right)$ are all $p$-rational. 

    Given $r_1=-1,r_2=1$ and $A=2$, let $\mathcal{P}$ denote the infinite set of primes $p$ as given by Proposition \ref{factors} with $p>3$. By this proposition, there exist integers $m_1, m_2 > (\log p)^2$ such that $m_1^2 \mid (p+1)$ and $m_2^2 \mid (p-1)$. For $p\in\mathcal{P}$, we have
    \begin{align*}
       d_{K_1} &\le \frac{4\beta p(p+1)}{m_1^2} < \frac{4\beta p(p+1)}{(\log p)^4}, \\
        d_{K_2} &\le \frac{4\beta p(p-1)}{m_2^2} < \frac{4\beta p(p-1)}{(\log p)^4}, \\
        d_{K_3} &\le \frac{4\beta (p+1)(p-1)}{m_1^2 m_2^2} < \frac{4\beta (p+1)(p-1)}{(\log p)^8}.
    \end{align*}
    Using Louboutin's bound for imaginary quadratic field as stated in Proposition \ref{hk for imaginary field}, for $p\in\mathcal{P}$, we have
    \begin{align*}
        h_{K_1}&\leq\frac{6}{4\pi}\sqrt{\frac{4\beta p(p+1)}{(\log p)^4}}\left(\log \left(\frac{4\beta p(p+1)}{(\log p)^4}\right)+\frac{3}{2}\right)\ll \frac{p}{\log p},\\
        h_{K_2}&\leq\frac{6}{4\pi}\sqrt{\frac{4\beta p(p-1)}{(\log p)^4}}\left(\log \left(\frac{4\beta p(p-1)}{(\log p)^4}\right)+\frac{3}{2}\right)\ll \frac{p}{\log p},\\
        h_{K_3}&\leq\frac{6}{4\pi}\sqrt{\frac{4\beta (p+1)(p-1)}{(\log p)^8}}\left(\log \left(\frac{4\beta(p+1)(p-1)}{(\log p)^8}\right)+\frac{3}{2}\right)\ll \frac{p}{(\log p)^3},
    \end{align*}
    with absolute and effective implied constants. 

    Then for sufficiently large prime $p\in\mathcal{P}$, $h_{K_1},h_{K_2}$ and $h_{K_3}$ are not divisible by $p$, and thus the quadratic subfields $K_1=\mathbb{Q}\left(\sqrt{-\beta p(p+1)}\right),K_2=\mathbb{Q}\left(\sqrt{-\beta p(p-1)}\right)$ and $K_3=\mathbb{Q}\left(\sqrt{-\beta (p+1)(p-1)}\right)$ are all $p$-rational. 

    Therefore, there exist infinitely many primes $p$ such that the triquadratic field $K_\beta=\mathbb{Q}\left(\sqrt{p(p+1)},\sqrt{p(p-1)},\sqrt{-\beta}\right)$ is $p$-rational.
\end{proof}
\begin{remark}
    It is worth noting that the case $\beta=1$ corresponds to a family similar to those studied in \cite{CLS} (specifically, the case $\alpha=2$). However, a crucial distinction lies in the properties of the real biquadratic subfield $F = \mathbb{Q}\left(\sqrt{p(p+1)}, \sqrt{p(p-1)}\right)$.
In \cite{CLS}, the $p$-rationality of $F$ is derived as a consequence of the large square factor conditions.
In contrast, the real biquadratic subfield $F$ is unconditionally $p$-rational for all primes $p > 3$. While this property of $F$ was briefly noted in \cite{Benmerieme2021}, it is explicitly established by combining our Theorem \ref{p(p+1)} for the subfield $\mathbb{Q}\left(\sqrt{p(p+1)}\right)$ with the results of \cite{BM2021} for $\mathbb{Q}\left(\sqrt{p(p-1)}\right)$.
\end{remark}
    Let us consider the specific case $\beta=1$, which yields the triquadratic field $K = \mathbb{Q}\left(\sqrt{p(p+1)}, \sqrt{p(p-1)}, \sqrt{-1}\right)$. This corresponds to the family defined by $\alpha=2$ in the general framework of \cite[Corollary 2]{CLS}.
    
    We emphasize that the set of prime numbers $p$ for which $K$ is $p$-rational is generally different from the set obtained for $L=\mathbb{Q}\left(\sqrt{p(p+2)},\allowbreak \sqrt{p(p-2)},\allowbreak \sqrt{-1}\right)$.
    For instance, there may exist primes $p$ where $K$ is $p$-rational but $L$ is not. 
    Therefore, by establishing the $p$-rationality of this new family, we successfully provide new explicit examples of $p$-rational fields that were not covered by previous specific constructions.

\section*{Acknowledgment}
We would like to express our great gratitude to Abbas Chazad Movahhedi for introducing us to Greenberg's conjecture and for his consistent support. 

\section*{Funding}
The authors are supported by National Natural Science Foundation of China (No. 12231009).

\end{document}